\newtheorem{theorem}{Theorem}
\newtheorem{proposition}{Proposition}[section]
\newtheorem{claim}{Claim}
\theoremstyle{definition}
\numberwithin{equation}{section}
\newcommand{\resetclaim}{\setcounter{claim}{0}}
\newenvironment{proofclaim}
  [1]
  [Proof of the claim]
  {\begin{proof}[#1]}
  {\end{proof}}
\newcommand{\N}{{\mathbb N}}
\newcommand{\Rset}{{\mathbb R}}
\newcommand{\st}{\;:\;}
\newcommand{\abs}[1]{\lvert #1 \rvert}
\newcommand{\bigabs}[1]{\bigl\lvert #1 \bigr\rvert}
\newcommand{\dualprod}[2]{\langle #1, #2 \rangle}
\newcommand{\norm}[1]{\lVert #1 \rVert}
\newcommand{\downto}{\searrow}
\newcounter{cte}
\newcommand{\Constant}{\refstepcounter{cte} C_{\thecte}}
\newcommand{\SameConstant}{C_{\thecte}}
\title[Nodal solutions for the quadratic Choquard equation]{Least action nodal solutions\\ for the quadratic Choquard equation}
\author{Marco Ghimenti}
\address{Universit\`a di Pisa\\
Dipartimento di Matematica\\
Largo B. Pontecorvo 5\\
56100 Pisa\\
Italy}
\email{marco.ghimenti@dma.unipi.it}
\author{Vitaly Moroz}
\address{Swansea University\\ Department of Mathematics\\ Singleton Park\\
Swansea\\ SA2~8PP\\ Wales, United Kingdom}
\email{V.Moroz@swansea.ac.uk}
\author{Jean Van Schaftingen}
\address{Universit\'e Catholique de Louvain\\
Institut de Recherche en Math\'ematique et Phy\-sique\\
Chemin du Cyclotron 2
bte L7.01.01\\ 1348 Louvain-la-Neuve \\
Belgium}
\email{Jean.VanSchaftingen@UCLouvain.be}
\keywords{Stationary nonlinear Schr\"odinger--Newton equation;
stationary Hartree equation; nodal Nehari set;
concentration-compactness.}
\subjclass[2010]{35J91 (35J20, 35Q55)}
\date{\today}
\begin{document}

\begin{abstract}
We prove the existence of a minimal action nodal solution for the quadratic Choquard equation
\begin{equation*}
 -\Delta u + u  = \bigl(I_\alpha \ast \abs{u}^2\bigr)u \quad\text{in \(\Rset^N\)},
\end{equation*}
where $I_\alpha$ is the Riesz potential of order $\alpha\in(0,N)$.
The solution is constructed as the limit of minimal action nodal solutions for the
nonlinear Choquard equations
\begin{equation*}
 -\Delta u + u  = \bigl(I_\alpha \ast \abs{u}^p\bigr)|u|^{p-2}u \quad\text{in \(\Rset^N\)}
\end{equation*}
when $p\downto 2$. The existence of minimal action nodal solutions for $p>2$ can be proved
using a variational minimax procedure over Nehari nodal set.
No minimal action nodal solutions exist when $p<2$.
\end{abstract}
\maketitle

\section{Introduction}

We study least action nodal solutions of the quadratic Choquard equation
\begin{equation}
\label{eqChoquard}
\tag{$\mathcal{C}_2$}
 -\Delta u + u  = \bigl(I_\alpha \ast \abs{u}^2\bigr) u \quad\text{in \(\Rset^N\)},
\end{equation}
for \(N \in \N\) and \(\alpha \in (0, N)\). Here \(I_\alpha : \Rset^N \to \Rset\) is the Riesz potential
defined for each \(x \in \Rset^N \setminus \{0\}\) by
\begin{align*}
I_\alpha (x) &= \frac{A_\alpha}{\abs{x}^{N - \alpha}}, &
&\text{where }
&A_\alpha = \frac{\Gamma(\tfrac{N-\alpha}{2})}
                   {\Gamma(\tfrac{\alpha}{2})\pi^{N/2}2^{\alpha} }.
\end{align*}
For \(N = 3\) and \(\alpha = 2\) equation \eqref{eqChoquard} is the  \emph{Choquard--Pekar equation}
which goes back to the 1954's work by S.\thinspace I.\thinspace Pekar  on quantum theory of a polaron at rest \citelist{\cite{Pekar1954}\cite{DevreeseAlexandrov2009}*{Section 2.1}}
and to 1976's model of P.\thinspace Choquard of an electron trapped in its own hole, in an approximation to Hartree-Fock theory of one-component plasma \cite{Lieb1977}. In the 1990's the same equation reemerged as a model of self-gravitating matter  \citelist{\cite{KRWJones1995newtonian}\cite{Moroz-Penrose-Tod-1998}}
and is known in that context as the \emph{Schr\"odinger--Newton equation}.
Equation \eqref{eqChoquard} is also studied as a nonrelativistic model of boson stars \citelist{\cite{Frohlich-Lenzmann}\cite{Lenzmann-2009}}.

Mathematically, the existence and some qualitative properties of solutions of Choquard equation \eqref{eqChoquard}
have been studied by variational methods in the early 1980s,
see \citelist{\cite{Lieb1977}\cite{Lions1980}\cite{Menzala1980}\cite{Lions1984-1}*{Chapter III}} for earlier work on the problem.
Recently, nonlinear Choquard equation
\begin{equation}
\label{eqChoquard-p}
\tag{$\mathcal{C}_p$}
 -\Delta u + u  = \bigl(I_\alpha \ast \abs{u}^p\bigr) \abs{u}^{p - 2} u \quad\text{in \(\Rset^N\)},
\end{equation}
with a parameter $p>1$ attracted interest of mathematicians, see
\citelist{\cite{Ma-Zhao-2010}\cite{CingolaniClappSecchi2012}\cite{CingolaniSecchi}\cite{ClappSalazar}\cite{MorozVanSchaftingen13}\cite{Ghimenti-soliton}} and further references therein; see also \citelist{\cite{CingolaniSecchi-2015}\cite{fractionalChoquard}} for modifications of \eqref{eqChoquard-p} involving fractional Laplacian.
Most of the recent works on Choquard type equations \eqref{eqChoquard-p} so far were dedicated to the study of positive solutions.
Nodal solutions were studied in \citelist{\cite{CingolaniClappSecchi2012}\cite{CingolaniSecchi}\cite{ClappSalazar}\cite{GhimentiVanSchaftingen}\cite{Weth2001}*{theorem 9.5}}.

Equation \eqref{eqChoquard-p} is the Euler equation of the Choquard action functional \(\mathcal{A}_p\)
which is defined for each function \(u\) in the Sobolev space
\(H^1 (\Rset^N)\) by
\[
 \mathcal{A}_p (u) = \frac{1}{2} \int_{\Rset^N} \abs{\nabla u}^2 + \abs{u}^2 - \frac{1}{2 p} \int_{\Rset^N} \bigl(I_\alpha \ast \abs{u}^p\bigr) \abs{u}^p.
\]
By the Hardy--Littlewood--Sobolev inequality, if \(s
\in (1, \frac{N}{\alpha})\) then for
every \(v \in L^s (\Rset^N)\), \(I_\alpha \ast v\in L^\frac{N s}{N - \alpha s}
(\Rset^N)\) and
\begin{equation}
\label{eqHLS}
 \int_{\Rset^N} \abs{I_\alpha \ast v}^\frac{N s}{N - \alpha s} \le C
\Bigl(\int_{\Rset^N} \abs{v}^s \Bigr)^\frac{N}{N - \alpha s},
\end{equation} (see for example \cite{LiebLoss2001}*{theorem 4.3}).
In view of the classical Sobolev embedding, the action functional \(\mathcal{A}_p\) is
well-defined and continuously differentiable if and only if
\[
 \frac{N - 2}{N + \alpha} \le \frac{1}{p} \le \frac{N}{N + \alpha}.
\]
By testing equation~\eqref{eqChoquard-p} against \(u\), the natural
\emph{Nehari constraint}
$\dualprod{\mathcal{A}' (u)}{u} = 0$ appears.
Then positive solutions of \eqref{eqChoquard-p} can be obtained by studying the infimum
\[
  c_{0, p} = \inf\, \bigl\{\mathcal{A}_p (u) \st u \in \mathcal{N}_{0, p}\bigr\}
\]
over the \emph{Nehari manifold}
\[
 \mathcal{N}_{0, p} = \bigl\{u \in H^1 (\Rset^N) \setminus \{0\} \st \dualprod{\mathcal{A}_p' (u)}{u} = 0\}.
\]
It turns out that the infimum $c_{0, p}$ is achieved when
\[
 \frac{N - 2}{N + \alpha} < \frac{1}{p} < \frac{N}{N + \alpha},
\]
and these assumptions are optimal \citelist{\cite{Lieb1977}\cite{Lions1980}\cite{MorozVanSchaftingen13}}.

Besides positive solutions minimising $c_{0, p}$, which are known as {\em groundstates} or {\em least action solutions},
additional solutions can be constructed by several variational construction.
In particular, one can consider \emph{least action nodal solutions}, the sign-changing counterpart of least actions solutions.
One way to search for such solutions is to consider the infimum
\[
  c_{\mathrm{nod}, p} = \inf\, \bigl\{\mathcal{A}_p (u) \st u \in \mathcal{N}_{0, p}\bigr\}
\]
over the \emph{Nehari nodal set}
\begin{multline*}
 \mathcal{N}_{\mathrm{nod}, p}
 =\bigl\{ u \in H^1 (\Rset^N) \st u^+ \ne 0 \ne u^-,\,\\
           \dualprod{\mathcal{A}_p'(u)}{u^+} = 0
           \text{ and } \dualprod{\mathcal{A}_p'(u)}{u^-} = 0\bigr\},
\end{multline*}
where \(u = u^+ - u^-\).
Such construction has been performed for local elliptic problems on bounded domains of \(\Rset^N\),
see \citelist{\cite{CeramiSoliminiStruwe1986}\cite{CastroCossioNeuberger1997}\cite{CastorCossioNeuberger1998}},
whereas the approach fails for the nonlinear Schr\"odinger equation
\begin{equation}\label{e-NLS}
-\Delta u+u=\abs{u}^{2p - 2} u \quad\text{in \(\Rset^N\)},
\end{equation}
which has no least action nodal solutions \citelist{\cite{AckermannWeth2005}*{lemma 2.4}\cite{Weth2006}\cite{GhimentiVanSchaftingen}}.
Moreover, the least action energy on the Nehari nodal set is not approximated by nodal solutions of \eqref{e-NLS},
see \cite{Weth2006}*{theorem 1.5}.

Surprisingly, it has been proved that unlike its local counterpart \eqref{e-NLS},
the nonlocal Choquard equation \eqref{eqChoquard-p} admits least action nodal solutions when
\[
 \frac{N - 2}{N + \alpha} < \frac{1}{p} < \frac{1}{2}
\]
\cite{GhimentiVanSchaftingen}*{theorem 2}; while the infimum $c_{\mathrm{nod}, p}$ is not achieved when
\[
 \frac{1}{2} < \frac{1}{p} < \frac{N}{N + \alpha},
\]
because \(c_{\mathrm{nod}, p} = c_{0, p}\) \cite{GhimentiVanSchaftingen}*{theorem 3}.
The borderline quadratic case \(p = 2\) was not covered by either existence or non-existence proofs in \cite{GhimentiVanSchaftingen}, because of the possible degeneracy of the minimax reformulation of the problem that was introduced (see  \cite{GhimentiVanSchaftingen}*{eq.~(3.3)})
and of difficulties in controlling the norms of the positive and negative parts of Palais--Smale sequences.

The goal of the present work is to study the existence of least action nodal solutions for Choquard equation \eqref{eqChoquard-p}
in the physically most relevant quadratic case \(p = 2\). Because the minimax procedure for capturing  $c_{\mathrm{nod}, p}$ introduced in \cite{GhimentiVanSchaftingen} apparently fails for $p=2$, a different approach is needed.
Instead of directly minimizing $c_{\mathrm{nod}, 2}$, our strategy will be to employ Choquard equations \eqref{eqChoquard-p} with $p>2$ as a regularisation family for the quadratic Choquard equation \eqref{eqChoquard} and to pass to the limit when $p\downto 2$.
Our main result is the following.

\begin{theorem}
  \label{theoremMain}
  If \(N \in \mathbb N\) and \(\alpha \in ((N - 4)^+, N)\),
  then there exists a weak solution \(u \in H^1 (\Rset^N)\) of Choquard equation \eqref{eqChoquard}
  such that
  \(u^+ \ne 0 \ne u^-\) and   \(\mathcal{A}_2 (u) = c_{\mathrm{nod},2}\).
\end{theorem}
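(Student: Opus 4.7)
The strategy, explicitly announced by the authors, is to obtain $u$ as a limit, along a sequence $p_n \downto 2$, of least action nodal solutions $u_p \in \mathcal{N}_{\mathrm{nod}, p}$ of the regularized equations \eqref{eqChoquard-p}. The existence of such $u_p$ for $p > 2$ sufficiently close to $2$ is guaranteed by theorem 2 of \cite{GhimentiVanSchaftingen}, and the hypothesis $\alpha > (N - 4)^+$ is precisely the condition $\frac{N - 2}{N + \alpha} < \frac{1}{2}$ that places $p = 2$ in the subcritical regime where $\mathcal{A}_2$ is continuously differentiable and the Nehari nodal set $\mathcal{N}_{\mathrm{nod}, 2}$ is non-empty.

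The first analytical step is the one-sided energy comparison $\limsup_{p \downto 2} c_{\mathrm{nod}, p} \le c_{\mathrm{nod}, 2}$. Given a test function $w \in H^1(\Rset^N)$ with $w^\pm \ne 0$ that nearly realizes $c_{\mathrm{nod}, 2}$, one must produce positive scalars $s_p, t_p \to 1$ such that $s_p w^+ - t_p w^- \in \mathcal{N}_{\mathrm{nod}, p}$; this is a two-parameter fiber-map analysis, delicate because the nonlocal cross interaction $\int (I_\alpha \ast \abs{w^+}^p)\abs{w^-}^p$ couples the positive and negative parts. Combined with the identities $\dualprod{\mathcal{A}_p'(u_p)}{u_p} = 0$ and $\dualprod{\mathcal{A}_p'(u_p)}{u_p^\pm} = 0$, the bound $\mathcal{A}_p(u_p) \le c_{\mathrm{nod}, 2} + o(1)$ yields, via the coercivity identity $\mathcal{A}_p(u_p) - \tfrac{1}{2p}\dualprod{\mathcal{A}_p'(u_p)}{u_p} = \tfrac{p - 1}{2p}\norm{u_p}_{H^1}^2$, a uniform $H^1$-bound on $u_p$; separate uniform lower bounds on $\norm{u_p^\pm}_{H^1}$ then follow from \eqref{eqHLS} applied to the two Nehari-type constraints.

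Standard Lions-type concentration-compactness, applied first to $u_p^+$, furnishes translations $y_p \in \Rset^N$ such that, along a subsequence, $u_p(\cdot - y_p) \weakto u$ in $H^1$ with $u^+ \ne 0$. The crucial and most delicate point is that $u_p^+$ and $u_p^-$ do not drift apart, so that $u^-$ also survives in the same translated limit. This dichotomy must be ruled out using the nonlocal coupling: if the supports of $u_p^+$ and $u_p^-$ separated at infinity, the cross term in $\dualprod{\mathcal{A}_p'(u_p)}{u_p^+}$ would become negligible, the Nehari identity would asymptotically reduce to the local positive-part Nehari condition, and the energy of $u_p^+$ alone would be at least $c_{0, p}$; adding the symmetric estimate for $u_p^-$ would then force $c_{\mathrm{nod}, p} \ge 2 c_{0, p}$, contradicting the first-step upper bound for $p$ close to $2$. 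Securing both $u^\pm \ne 0$ in the weak limit under the borderline behaviour of the fiber maps as $p \downto 2$ is the main obstacle of the argument and is exactly what fell outside the scope of \cite{GhimentiVanSchaftingen}.

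Once both $u^\pm \ne 0$ are established, almost-everywhere convergence of $u_p$ and the uniform $L^{2p}$-bounds allow one to pass to the limit in $I_\alpha \ast \abs{u_p}^p \to I_\alpha \ast \abs{u}^2$ and in the Euler--Lagrange equation, so that $u$ is a weak solution of \eqref{eqChoquard} and, in particular, $u \in \mathcal{N}_{\mathrm{nod}, 2}$, whence $\mathcal{A}_2(u) \ge c_{\mathrm{nod}, 2}$. Weak lower semi-continuity of the quadratic part of $\mathcal{A}_p$, combined with Fatou/Brezis--Lieb-type handling of the nonlocal term and with the first-step inequality $\limsup_{p \downto 2} c_{\mathrm{nod}, p} \le c_{\mathrm{nod}, 2}$, gives the reverse inequality $\mathcal{A}_2(u) \le c_{\mathrm{nod}, 2}$ and pins down $\mathcal{A}_2(u) = c_{\mathrm{nod}, 2}$.
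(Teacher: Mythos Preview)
Your overall architecture matches the paper's: approximate by least action nodal solutions \(u_p\) of \eqref{eqChoquard-p}, prove \(\limsup_{p\downto 2}c_{\mathrm{nod},p}\le c_{\mathrm{nod},2}\) via a two-parameter fibre map, rule out separation of \(u_p^+\) and \(u_p^-\) via the strict inequality \(c_{\mathrm{nod},2}<2c_{0,2}\) together with the continuity of \(p\mapsto c_{0,p}\), and pass to the limit. The separation argument you sketch is exactly the paper's Claim~3.

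There is, however, a genuine gap at the step you pass over most quickly: the assertion that ``separate uniform lower bounds on \(\norm{u_p^\pm}_{H^1}\) then follow from \eqref{eqHLS} applied to the two Nehari-type constraints.'' From \(\dualprod{\mathcal{A}_p'(u_p)}{u_p^\pm}=0\) and Hardy--Littlewood--Sobolev one obtains only
\[
\norm{u_p^\pm}_{H^1}^2=\int_{\Rset^N}\bigl(I_\alpha*\abs{u_p}^p\bigr)\abs{u_p^\pm}^p
\le C\,\norm{u_p}_{H^1}^{p}\,\norm{u_p^\pm}_{H^1}^{p},
\]
that is, \(\norm{u_p^\pm}_{H^1}^{2-p}\le C\norm{u_p}_{H^1}^{p}\). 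Since \(2-p\to 0\), this inequality is asymptotically empty and places \emph{no} uniform lower bound on \(\norm{u_p^\pm}_{H^1}\); a sequence with \(\norm{u_{p_n}^-}_{H^1}=e^{-1/(p_n-2)}\to 0\), for instance, is perfectly compatible with it. This degeneracy is precisely why \(p=2\) was left open in \cite{GhimentiVanSchaftingen}, and without it your subsequent concentration-compactness step for \(u_p^+\) (and the very formulation of the separation dichotomy) is unsupported.

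The paper fills this gap with an argument you do not anticipate. Assuming for contradiction that \(\norm{u_{p_n}^-}_{H^1}\to 0\), one \emph{renormalises} \(v_p=u_p^-/\norm{u_p^-}_{H^1}\); the Nehari constraint then reads \(\int(I_\alpha*\abs{u_p}^p)\abs{v_p}^p=1\). A localised Hardy--Littlewood--Sobolev estimate forces both \(u_p\) and \(v_p\) to concentrate nontrivial mass in a common ball, and after translation one extracts weak limits \(u\) and \(v\). Because \(u_p^-\to 0\) strongly, the limit \(u\) is a \emph{nonnegative} weak solution of \eqref{eqChoquard}; by regularity and the strong maximum principle it is either identically zero or strictly positive. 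Either alternative contradicts the fact that \(v\not\equiv 0\) is, by construction, supported where \(u\le 0\). This renormalisation-plus-maximum-principle device is the new idea of the paper and is what your outline is missing.
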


The constructed nodal solution $u$ is regular, that is,  $u\in L^1(\Rset^N)\cap C^2(\Rset^N)$, see \cite{MorozVanSchaftingen13}*{proposition 4.1}.
The conditions of the theorem are optimal, as for \(\alpha\not\in ((N - 4)^+, N)\) no sufficiently regular solutions to \eqref{eqChoquard} exist in $H^1 (\Rset^N)$ by the Poho\v zaev identity \cite{MorozVanSchaftingen13}*{theorem 2}.

In order to prove theorem \eqref{theoremMain}, we will approximate a least action nodal solution of quadratic
Choquard equation \eqref{eqChoquard} by renormalised least action nodal solution of \eqref{eqChoquard-p} with $p\downto 2$.
To do this, in section~\ref{section2} we first establish continuity of the energy level \(c_{0, p}\) with respect to $p$.
Then in section~\ref{section3} we prove theorem \ref{theoremMain} by showing that as \(p \downto 2\), positive and negative parts
of the renormalised least action nodal solutions of \eqref{eqChoquard-p} do not vanish and do not diverge apart from each other.

\section{Continuity of the critical levels}
\label{section2}

In the course of the proof of theorem~\ref{theoremMain}, we will need the following strict inequality on critical levels.

\begin{proposition}
\label{propositionStrictInequality}
If \(\frac{N - 2}{N + \alpha} < \frac{1}{p} < \frac{N}{N + \alpha}\), then
\[
  c_{\mathrm{nod}, p} < 2 c_{0, p}.
\]
\end{proposition}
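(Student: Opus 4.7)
The strategy is to test $\mathcal{A}_p$ on an antisymmetric combination of two widely separated translates of a positive groundstate. Let $w \in H^1(\Rset^N)$ be a positive, radial, strictly radially decreasing groundstate of $\mathcal{A}_p$ on $\mathcal{N}_{0,p}$, so that $B := \|w\|_{H^1}^2 = \int_{\Rset^N}(I_\alpha \ast w^p) w^p$ and $c_{0,p} = \mathcal{A}_p(w) = \tfrac{1}{2}(1 - 1/p) B$; existence, symmetry and exponential decay of $w$ are taken from \cite{MorozVanSchaftingen13}. For $R > 0$ set $u_R(x) := w(x - Re_1) - w(x + Re_1)$. Then $u_R(-x_1, x_2, \dots, x_N) = -u_R(x)$, and strict radial monotonicity forces $u_R > 0$ on $\{x_1 > 0\}$ and $u_R < 0$ on $\{x_1 < 0\}$, so $u_R^+$ and $u_R^-$ have disjoint supports contained in opposite closed half-spaces.

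Exploiting the reflection symmetry $x_1 \mapsto -x_1$, I would look for a single scalar $s_R > 0$ such that $s_R u_R \in \mathcal{N}_{\mathrm{nod}, p}$; both Nehari identities $\dualprod{\mathcal{A}_p'(s_R u_R)}{s_R u_R^{\pm}} = 0$ then reduce to the single equation
\begin{equation*}
  s_R^{2p - 2} = \frac{B_R}{D_R + E_R},
\end{equation*}
where $B_R := \|u_R^+\|_{H^1}^2$, $D_R := \int_{\Rset^N}(I_\alpha \ast (u_R^+)^p)(u_R^+)^p$ and $E_R := \int_{\Rset^N}(I_\alpha \ast (u_R^+)^p)(u_R^-)^p > 0$. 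Using the disjointness of supports and inserting this relation into the functional yields
\begin{equation*}
  \mathcal{A}_p(s_R u_R) = s_R^2 B_R - \frac{s_R^{2p}}{p}(D_R + E_R) = \Bigl(1 - \frac{1}{p}\Bigr) s_R^2 B_R.
\end{equation*}
Since $2 c_{0, p} = (1 - 1/p) B$, the proposition reduces to proving the strict inequality $s_R^2 B_R < B$ for some $R > 0$.

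The decisive step is the asymptotic analysis as $R \to \infty$. The exponential decay of $w$ makes $B_R - B$ and $D_R - B$ both of order $O(e^{-\gamma R})$ for some $\gamma > 0$, because in each case the only nontrivial contribution to the error comes from integrals of products involving both $w(\cdot - Re_1)$ and $w(\cdot + Re_1)$. By contrast, the positive Riesz kernel $I_\alpha(y) = A_\alpha \abs{y}^{-(N-\alpha)}$ decays only polynomially, and a standard computation gives
\begin{equation*}
  E_R = A_\alpha (2R)^{-(N-\alpha)} \Bigl(\int_{\Rset^N} w^p\Bigr)^{2} + o(R^{-(N-\alpha)}),
\end{equation*}
so that $E_R > 0$ dominates all other error terms for $R$ large. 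Combining these expansions with the Nehari identity above gives $s_R^2 = 1 - E_R/((p-1) B) + o(E_R)$, and hence
\begin{equation*}
  s_R^2 B_R = B - \frac{E_R}{p-1} + o(R^{-(N-\alpha)}) < B
\end{equation*}
for all sufficiently large $R$.

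The main obstacle I expect is precisely this quantitative comparison between two decay scales: one must control the exponentially small overlap corrections entering $B_R$ and $D_R$ carefully enough to see that the positive, only polynomially decaying Riesz interaction $E_R$ dominates. The strict inequality is therefore inherently nonlocal --- it reflects the long range of $I_\alpha$ --- and is the reason the phenomenon has no counterpart for the local nonlinear Schr\"odinger equation \eqref{e-NLS}.
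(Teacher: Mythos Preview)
Your proposal is correct and follows essentially the same strategy the paper invokes (via \cite{GhimentiVanSchaftingen}*{propositions 2.4 and 3.7}): build a test function from two far-apart signed copies of the groundstate and show that the attractive Riesz interaction, which decays only like $R^{-(N-\alpha)}$, beats the exponentially small overlap corrections. The decisive mechanism you identify---$E_R \sim A_\alpha (2R)^{-(N-\alpha)}\bigl(\int w^p\bigr)^2$ dominating $B_R - B$ and $D_R - B$---is exactly the ``balance between truncation and the Riesz potential interaction'' the paper alludes to.

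The one genuine technical difference is that the cited reference works with \emph{truncated} translates (so that $u^+$ and $u^-$ are literally the two disjointly supported bumps), whereas you use the exact odd function $u_R = w(\cdot - Re_1) - w(\cdot + Re_1)$ and let strict radial monotonicity give $u_R^\pm$ supported in opposite half-spaces. Your route is slightly cleaner: the reflection symmetry collapses the two Nehari conditions to a single scalar equation for $s_R$, and no cutoff errors enter. The truncation approach, on the other hand, makes the identification of $u^\pm$ with the individual bumps exact from the start, at the price of extra cutoff remainders. Both lead to the same asymptotic inequality $s_R^2 B_R = B - E_R/(p-1) + o(R^{-(N-\alpha)}) < B$, so the difference is cosmetic rather than conceptual.
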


This follows directly from \cite{GhimentiVanSchaftingen}*{propositions 2.4 and 3.7}.
The construction in the latter reference is done by taking translated pa ositive and a negative copy of the groundstate
of \eqref{eqChoquard-p} and by estimating carefully the balance between the truncation and the effect of the Riesz potential interaction. When \(p < 2\), it is known that \(c_{\mathrm{nod}, p} = c_{0, p}\) \cite{GhimentiVanSchaftingen}*{theorem 3} and proposition~\ref{propositionStrictInequality} loses its interest.

Because we shall approximate the quadratic case \(p = 2\) by \(p > 2\), it will be useful to have some information about the continuity of \(c_{0, p}\).

\begin{proposition}
\label{continuityGroundstate}
The function
\(
 p \in (\frac{N + \alpha}{N}, \frac{N + \alpha}{(N - 2)_+}) \mapsto c_{0, p} \in \Rset
\)
is continuous.
\end{proposition}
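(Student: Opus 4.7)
The plan is to prove upper and lower semicontinuity at any fixed $p_0$ in the given range separately, both via the Nehari fibering. For any \(u \in H^1(\Rset^N) \setminus \{0\}\) with \(D(u,p) := \int_{\Rset^N}(I_\alpha \ast \abs{u}^p)\abs{u}^p > 0\), the slice \(t \mapsto \mathcal{A}_p(tu)\) attains its unique positive maximum at \(t_p(u) > 0\) characterized by \(t_p(u)^{2p-2} = \int_{\Rset^N}(\abs{\nabla u}^2 + \abs{u}^2)/D(u,p)\), giving the minimax formula
\[
c_{0,p} = \inf_{u \ne 0} \mathcal{A}_p(t_p(u) u) = \inf_{u \ne 0}\frac{p-1}{2p} \biggl(\frac{\bigl(\int_{\Rset^N}(\abs{\nabla u}^2 + \abs{u}^2)\bigr)^p}{D(u,p)}\biggr)^{1/(p-1)}.
\]

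For upper semicontinuity at $p_0$, I would fix a minimizer $u_0$ of $c_{0,p_0}$ (which exists in the given range by the cited results) and use it as a competitor for $c_{0,p}$. The decisive input is the continuity of $p \mapsto D(u_0, p)$ at $p_0$: since $u_0 \in L^q(\Rset^N)$ for every $q$ in a Sobolev range containing $2Np/(N+\alpha)$ for $p$ near $p_0$, the pointwise bound $\abs{u_0}^p \le \abs{u_0}^{p_0-\varepsilon} + \abs{u_0}^{p_0+\varepsilon} \in L^{2N/(N+\alpha)}$ allows dominated convergence to yield $\abs{u_0}^p \to \abs{u_0}^{p_0}$ in $L^{2N/(N+\alpha)}$, and then \eqref{eqHLS} transfers this to $D(u_0, p) \to D(u_0, p_0)$. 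It follows that $t_p(u_0) \to 1$ and $\mathcal{A}_p(t_p(u_0) u_0) \to \mathcal{A}_{p_0}(u_0) = c_{0,p_0}$.

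For lower semicontinuity, I would take $p_n \to p_0$ and minimizers $u_n$ of $c_{0,p_n}$, which by standard symmetry results may be taken positive and radially symmetric. The Nehari identity $\int(\abs{\nabla u_n}^2 + \abs{u_n}^2) = D(u_n, p_n)$, combined with \eqref{eqHLS} and Sobolev embedding, forces $\|u_n\|_{H^1} \ge c > 0$ uniformly in $n$; the upper semicontinuity just established and the identity $c_{0,p_n} = \tfrac{p_n-1}{2p_n}\|u_n\|_{H^1}^2$ bound $\|u_n\|_{H^1}$ from above. Radial $H^1$-compactness then extracts a subsequence with $u_n \weakto u$ in $H^1$ and $u_n \to u$ strongly in $L^q(\Rset^N)$ for every $q \in (2,2^*)$; the elementary bound $\abs{v}^{q_n} \le \abs{v}^{q_1} + \abs{v}^{q_2}$ for $q_n \in [q_1, q_2]$ upgrades this to strong convergence in $L^{q_n}$ with the moving exponent $q_n := 2Np_n/(N+\alpha)$, so that the uniform lower bound on $\|u_n\|_{L^{q_n}}$ forces $u \ne 0$.

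The main obstacle will be passing $D(u_n, p_n) \to D(u, p_0)$ when both the functions and the exponents vary. I would split $\abs{u_n}^{p_n} - \abs{u}^{p_0} = (\abs{u_n}^{p_n} - \abs{u}^{p_n}) + (\abs{u}^{p_n} - \abs{u}^{p_0})$: the first summand is controlled in $L^{2N/(N+\alpha)}$ via the Lipschitz bound $\bigabs{\abs{a}^p - \abs{b}^p} \le p \max(\abs{a},\abs{b})^{p-1}\abs{a-b}$ and Hölder's inequality, which reduces matters to the $L^{q_n}$-convergence of $u_n$ just established; the second vanishes by dominated convergence. Then \eqref{eqHLS} yields $D(u_n, p_n) \to D(u, p_0)$. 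To conclude, let $t > 0$ be such that $tu \in \mathcal{N}_{0,p_0}$; weak lower semicontinuity gives $\int(\abs{\nabla u}^2 + \abs{u}^2) \le \liminf \int(\abs{\nabla u_n}^2 + \abs{u_n}^2) = \lim D(u_n, p_n) = D(u, p_0)$, whence $t \le 1$, and therefore
\[
c_{0,p_0} \le \mathcal{A}_{p_0}(tu) = \frac{p_0-1}{2p_0}\, t^{2p_0} D(u, p_0) \le \frac{p_0-1}{2p_0} D(u, p_0) = \lim_{n \to \infty} c_{0,p_n},
\]
which completes the proof.
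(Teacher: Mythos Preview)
Your argument is correct and, for the upper semicontinuity, essentially coincides with the paper's: both rest on the Nehari fibering formula and the continuity of \(p \mapsto \int (I_\alpha \ast \abs{u}^p)\abs{u}^p\) for fixed \(u\).

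For the lower semicontinuity you take a genuinely different route. The paper does \emph{not} invoke the radial symmetry of groundstates; instead it applies a Lions-type concentration--compactness lemma to locate, after a suitable translation, a nontrivial weak limit \(u_*\), and then passes to the limit in the equation itself to conclude that \(u_*\) is a critical point (hence lies on \(\mathcal{N}_{0,p_*}\)), finishing by weak lower semicontinuity of the \(H^1\)-norm. Your approach exploits that minimizers can be taken radial, so Strauss' compact embedding \(H^1_{\mathrm{rad}} \hookrightarrow L^q\) for \(q \in (2,2^*)\) yields strong \(L^q\)-convergence and hence \(D(u_n,p_n) \to D(u,p_0)\) directly; you then compare via the fibering map rather than through the equation. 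Your route is shorter and avoids the concentration--compactness machinery, at the price of importing the symmetry result for groundstates as a black box. One caveat: the Strauss compactness requires \(N \ge 2\) (for \(N = 1\), even \(H^1\)-functions do not embed compactly into \(L^q(\Rset)\)), so as written your argument does not cover the one-dimensional case, whereas the paper's translation-based argument does.
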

\begin{proof}
It can be observed that
\[
 c_{0, p} = \inf \Biggl\{ \Bigl(\frac{1}{2} - \frac{1}{2 p}\Bigr) \Biggl(\frac{\Bigl(\displaystyle \int_{\Rset^N} \abs{\nabla u}^2 + \abs{u}^2\Bigr)^\frac{p}{p - 1}}{\Bigl(\displaystyle \int_{\Rset^N} \bigl(I_\alpha \ast \abs{u}^p\bigr) \abs{u}^p\Bigr)^\frac{1}{p - 1}} \Biggr) \st u \in H^1 (\Rset^N) \setminus \{0\} \Biggr\}.
\]
Since for every \(u \in H^1 (\Rset^N)\), the function
\[
 p \in (\tfrac{N + \alpha}{N}, \tfrac{N + \alpha}{(N - 2)_+}) \mapsto \int_{\Rset^N} \bigl(I_\alpha \ast \abs{u}^p\bigr) \abs{u}^p
\]
is continuous, the function \(p \mapsto c_{0, p}\) is then upper semicontinuous as an infimum of continuous functions.

We now consider the more delicate question of the lower semicontinuity. There exists a family of functions \(u_p \in H^1 (\Rset^N)\) such that \eqref{eqChoquard-p} holds
and \(\mathcal{A}_p (u_p) = c_{0, p}\).
By the upper semicontinuity, it follows that the function \(p \in (\frac{N + \alpha}{N}, \frac{N + \alpha}{(N - 2)_+}) \mapsto u_p\in H^1 (\Rset^N)\) is locally bounded.

In view of the equation \eqref{eqChoquard-p} and of the Hardy--Littlewood--Sobolev inequality,
we have
\[
 \int_{\Rset^N} \abs{\nabla u_p}^2 + \abs{u_p}^2
 = \int_{\Rset^N} \bigl(I_\alpha \ast \abs{u_p}^p\bigr) \abs{u_p}^p
 \le \Constant \Bigl(\int_{\Rset^N} \abs{u_p}^\frac{2 N p}{N + \alpha}\Bigr)^\frac{N + \alpha}{N},
\]
where constant $C_1$ could be chosen uniformly bounded when \(p\) remains in a compact subset of \((\frac{N + \alpha}{N}, \frac{N + \alpha}{(N - 2)_+})\).
This implies that \citelist{\cite{Lions1984CC2}*{lemma I.1}\cite{Willem1996}*{lemma
1.21}\cite{MorozVanSchaftingen13}*{lemma 2.3}\cite{VanSchaftingen2014}*{(2.4)}}
\begin{multline*}
 \int_{\Rset^N} \abs{\nabla u_p}^2 + \abs{u_p}^2\\
 \le \Constant \Bigl(\int_{\Rset^N} \abs{\nabla u_p}^2 + \abs{u_p}^2 \Bigr)^\frac{N + \alpha}{N} \Bigl( \sup_{a \in \Rset^N} \int_{B_1 (a)} \abs{u_p}^\frac{2 N p}{N + \alpha} \Bigr)^{(N + \alpha)\bigl(\frac{1}{N} - \frac{N + \alpha}{p} \bigr)},
\end{multline*}
where \(\SameConstant\) can be also chosen uniformly bounded when \(p\) is in a compact subset of \((\frac{N + \alpha}{N}, \frac{N + \alpha}{(N - 2)_+})\).
Up to a translation in \(\Rset^N\), we can thus assume that the function
\[
  p \in (\tfrac{N + \alpha}{N}, \tfrac{N + \alpha}{(N - 2)_+}) \mapsto \int_{B_1} \abs{u}^\frac{2 N p}{N + \alpha}
\]
is locally bounded away from \(0\).

We assume now \((p_n)_{n \in \N}\) to be a sequence in the interval \((\tfrac{N + \alpha}{N}, \tfrac{N + \alpha}{(N - 2)_+})\) that converges to \(p_* \in (\tfrac{N + \alpha}{N}, \tfrac{N + \alpha}{(N - 2)_+})\). Since the sequence \((u_{p_n})_{n \in \N}\) is bounded in the space \(H^1 (\Rset^N)\), there exists a sequence \((n_k)_{k \in \N}\) diverging to infinity and \(u_* \in H^1 (\Rset^N)\) such that the subsequence \((u_{p_{n_k}})_{k \in \N}\) converges weakly in \(H^1 (\Rset^N)\) to \(u_*\).
Moreover, we have by the Rellich--Kondrachov compactness theorem
\[
 \int_{B_1} \abs{u_*}^\frac{2 N p_*}{N + \alpha}
 = \lim_{k \to \infty} \int_{B_1} \abs{u_{p_{n_k}}}^\frac{2 N p}{N + \alpha} > 0.
\]
Thus \(u_* \ne 0\) and \(u_*\) satisfies
\[
 -\Delta u_* + u_* = \bigl(I_\alpha \ast \abs{u}^{p_*}\bigr) \abs{u}^{p_* - 2} u.
\]
We have thus
\[
\begin{split}
 c_{0, p_*} \le \mathcal{A}_{p_*} (u_*)
 &= \Bigl(\frac{1}{2} - \frac{1}{2 p_*}\Bigr)
 \int_{\Rset^N} \abs{\nabla u_*}^2 + \abs{u_*}^2\\
 &\le \liminf_{k \to \infty} \Bigl(\frac{1}{2} - \frac{1}{2 p_{n_k}}\Bigr)
 \int_{\Rset^N} \abs{\nabla u_{p_{n_k}}}^2 + \abs{u_{p_{n_k}}}^2\\
 &= \liminf_{k \to \infty} \mathcal{A}_{p_{n_k}} (u_{p_{n_k}})
 = \liminf_{k \to \infty} c_{0, p_{n_k}}.
\end{split}
\]
Since the sequence \((p_n)_{n \in \N}\) is arbitrary, this proves the lower semicontinuity.
\end{proof}

\section{Proof of the main theorem}
\label{section3}

\begin{proof}%
  [Proof of theorem~\ref{theoremMain}]%
  \resetclaim
We shall successively construct a family of solutions of \eqref{eqChoquard}, prove that neither the positive nor the negative part of this family goes to \(0\), and show that the negative part and the positive part cannot diverge from each other as \(p \to 2\). The theorem will then follow from a classical weak convergence and local compactness argument.

\begin{claim}
\label{claimConstructionSequence}
There exists a family \((u_p)_{p \in (2, \frac{N + \alpha}{N - 2})}\) in \(H^1 (\Rset^N)\) such that for each \(p \in (2, \frac{N + \alpha}{N - 2})\), the function \(u_p\) changes sign and
\[
 -\Delta u_p + u_p = \bigl(I_\alpha \ast \abs{u_p}^p\bigr) \abs{u_p}^{p - 2} u_p.
\]
Moreover
\[
  \limsup_{p \to 2} \mathcal{A}_{p} (u_p) \le c_{\mathrm{nod},2}
\]
and
\[
 \limsup_{p \to 2} \int_{\Rset^N} \abs{\nabla u_p}^2 + \abs{u_p}^2 \le
 4\, c_{\mathrm{nod},2}.
\]

\end{claim}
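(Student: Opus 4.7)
The plan is to take, for each $p\in(2,\frac{N+\alpha}{N-2})$, the function $u_p$ to be a least action nodal solution of \eqref{eqChoquard-p}, whose existence is given by \cite{GhimentiVanSchaftingen}*{theorem~2}; by construction $u_p$ changes sign, solves the equation, and satisfies $\mathcal{A}_p(u_p)=c_{\mathrm{nod},p}$. Testing the equation against $u_p$ yields the Nehari identity $\int_{\Rset^N}\abs{\nabla u_p}^2+\abs{u_p}^2=\int_{\Rset^N}\bigl(I_\alpha\ast\abs{u_p}^p\bigr)\abs{u_p}^p$, so that
\[
\mathcal{A}_p(u_p)=\Bigl(\frac{1}{2}-\frac{1}{2p}\Bigr)\int_{\Rset^N}\abs{\nabla u_p}^2+\abs{u_p}^2,
\]
and since $\frac{2p}{p-1}\to 4$ as $p\downto 2$, the $H^1$ bound is a direct consequence of the action bound. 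The heart of the claim is therefore to establish the upper-semicontinuity estimate
\[
\limsup_{p\downto 2}c_{\mathrm{nod},p}\le c_{\mathrm{nod},2}.
\]

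The strategy is to build admissible competitors in $\mathcal{N}_{\mathrm{nod},p}$ from an almost-minimizer of $c_{\mathrm{nod},2}$. Fix $\varepsilon>0$ and pick $u\in\mathcal{N}_{\mathrm{nod},2}$ with $\mathcal{A}_2(u)\le c_{\mathrm{nod},2}+\varepsilon$; nonemptiness of $\mathcal{N}_{\mathrm{nod},2}$ can be checked by taking two widely separated translates of a positive bump and scaling, which makes the cross-interaction small and renders the Nehari system (linear in $(s_+^2,s_-^2)$ at $p=2$) positively solvable. For each $p>2$ close to $2$ I would then seek $s_\pm(p)>0$ such that $v_p:=s_+(p)u^+-s_-(p)u^-\in\mathcal{N}_{\mathrm{nod},p}$, that is, satisfying
\begin{align*}
s_+^2\int_{\Rset^N}\abs{\nabla u^+}^2+\abs{u^+}^2 &= s_+^{2p}A_p+s_+^p s_-^p B_p,\\
s_-^2\int_{\Rset^N}\abs{\nabla u^-}^2+\abs{u^-}^2 &= s_-^{2p}C_p+s_+^p s_-^p B_p,
\end{align*}
where $A_p=\int_{\Rset^N}\bigl(I_\alpha\ast\abs{u^+}^p\bigr)\abs{u^+}^p$, $C_p=\int_{\Rset^N}\bigl(I_\alpha\ast\abs{u^-}^p\bigr)\abs{u^-}^p$, and $B_p=\int_{\Rset^N}\bigl(I_\alpha\ast\abs{u^+}^p\bigr)\abs{u^-}^p$. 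The hypothesis $u\in\mathcal{N}_{\mathrm{nod},2}$ amounts exactly to $(s_+,s_-)=(1,1)$ solving this system at $p=2$.

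The existence of $s_\pm(p)$ near $p=2$ would then follow from the implicit function theorem. A direct computation at $(s_+,s_-,p)=(1,1,2)$ shows that the Jacobian of the system with respect to $(s_+,s_-)$ equals
\[
2\begin{pmatrix} A_2 & B_2\\ B_2 & C_2\end{pmatrix},
\]
whose determinant $4(A_2C_2-B_2^2)$ is strictly positive by the strict Cauchy--Schwarz inequality for the positive-definite symmetric bilinear form $(f,g)\mapsto\int_{\Rset^N}\bigl(I_\alpha\ast f\bigr)g$ applied to $f=\abs{u^+}^2$ and $g=\abs{u^-}^2$, which are not proportional because their supports are disjoint and nontrivial. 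The implicit function theorem thus yields a continuous curve $s_\pm(p)\to 1$, whence $v_p\in\mathcal{N}_{\mathrm{nod},p}$ and, by continuity in $p$ of the integrals involved at fixed $u$, $\mathcal{A}_p(v_p)\to\mathcal{A}_2(u)$. Combining,
\[
\limsup_{p\downto 2}c_{\mathrm{nod},p}\le\limsup_{p\downto 2}\mathcal{A}_p(v_p)=\mathcal{A}_2(u)\le c_{\mathrm{nod},2}+\varepsilon,
\]
and letting $\varepsilon\to 0$ completes the argument. The main obstacle I expect is the non-degeneracy of the Jacobian, which reflects a kind of Riesz-potential independence of $\abs{u^+}^2$ and $\abs{u^-}^2$; it is precisely here that the quadratic nonlocal structure at $p=2$ turns out to be benign rather than obstructive for this step.
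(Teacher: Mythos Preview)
Your proof is correct and follows the same overall architecture as the paper: take $u_p$ to be a least action nodal solution from \cite{GhimentiVanSchaftingen}*{theorem~2}, use the Nehari identity to reduce both asymptotic bounds to $\limsup_{p\downto 2}c_{\mathrm{nod},p}\le c_{\mathrm{nod},2}$, and obtain this upper semicontinuity by projecting a fixed $w\in\mathcal{N}_{\mathrm{nod},2}$ onto $\mathcal{N}_{\mathrm{nod},p}$ via suitable rescalings $s_\pm(p)$ of $w^\pm$.

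The only difference is how the rescaling factors are found. The paper parametrises by $t_\pm\mapsto t_\pm^{1/p}w^\pm$ and observes that, for $p>2$, the map
\[
(t_+,t_-)\mapsto \mathcal{A}_p\bigl(t_+^{1/p}w^+-t_-^{1/p}w^-\bigr)
\]
is strictly concave on $[0,\infty)^2$, so it has a unique maximiser, which is automatically interior and therefore lies on $\mathcal{N}_{\mathrm{nod},p}$; convergence $t_{\pm,p}\to 1$ then follows from uniform convergence $E_p\to E_2$ on compacta together with coercivity. Your route via the implicit function theorem is equally valid; the nondegeneracy check $A_2C_2>B_2^2$ is exactly the strict Cauchy--Schwarz inequality for the positive bilinear form $(f,g)\mapsto\int_{\Rset^N}(I_{\alpha/2}\ast f)(I_{\alpha/2}\ast g)$. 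Two minor remarks: the Jacobian you compute is actually $-2\bigl(\begin{smallmatrix}A_2&B_2\\ B_2&C_2\end{smallmatrix}\bigr)$ rather than $+2$ times it, which of course does not affect invertibility; and the paper's concavity argument has the small advantage that it bypasses any smoothness in $p$ and any nondegeneracy verification, yielding the interior critical point for free.
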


The claim implies the uniform boundedness in \(H^1 (\Rset^N)\) of the solutions \(u_p\) since \(c_{\mathrm{nod}, 2} \le c_{\mathrm{odd}, 2} < 2 c_{0,2} < \infty\) \cite{GhimentiVanSchaftingen}.

\begin{proofclaim}
The existence of a function \(u_p \in H^1 (\Rset^N)\) that changes sign and satisfies Choquard equation \eqref{eqChoquard-p}
has been proved in \cite{GhimentiVanSchaftingen}*{theorem 2}. Moreover,
\[
 \Bigl(\frac{1}{2} - \frac{1}{2p}\Bigr) \int_{\Rset^N} \abs{\nabla u_p}^2 + \abs{u_p}^2 =  \mathcal{A}_p (u_p) = c_{\mathrm{nod}, p}.
\]
It remains to obtain some upper asymptotics on \(c_{\mathrm{nod}, p}\) as \(p \downto 2\).

Let \(w \in \mathcal{N}_{2, \mathrm{nod}}\) and define
\(w_p = t_{+, p}^{{1}/{p}} w^+ - t_{-, p}^{{1}/{p}} w^-\), where \((t_{+, p}, t_{-, p}) \in [0, \infty)^2\) is the unique maximizer of the concave function
\begin{multline*}
 (t_+, t_-) \in [0, \infty)^2
 \mapsto E_p (t_+, t_-) = \mathcal{A}_p (t_+^\frac{1}{p} w^+ - t_-^\frac{1}{p} w^-)\\
 = \frac{t_+^\frac{2}{p}}{2} \int_{\Rset^N} \abs{\nabla w^+}^2+\abs{w^+}^2
 + \frac{t_-^\frac{2}{p}}{2} \int_{\Rset^N} \abs{\nabla w^-}^2+\abs{w^-}^2\\
 - \frac{1}{2 p} \int_{\Rset^N} \bigabs{I_{\alpha/2} \ast \bigl(t_+ \abs{w^+}^p + t_- \abs{w^-}^p)}^2.
\end{multline*}
Since \((t_{+, p}, t_{-, p}) \in (0, \infty)\), we have \(w_p \in \mathcal{N}_{\mathrm{nod}, p}\) and
\[
 c_{\mathrm{nod}, p} \le \mathcal{A}_p (w_p).
\]
Since \(E_p (t_+, t_-) \to -\infty\) as \((t_+, t_-) \to \infty\) uniformly in \(p\) in bounded sets and since \(E_p \to E_2\) as \(p \to 2\) uniformly over compact subsets of \([0, \infty)^2\), we have \(t_{\pm, p} \to 1\) as \(p \to 2\). Therefore
\[
 \lim_{p \to 2} \mathcal{A}_p (w_p) = \mathcal{A}_2 (w).
\]
Since the function \(w \in \mathcal{N}_{\mathrm{nod}, 2}\) is arbitrary, we deduce that
\[
 \limsup_{p \to 2} c_{\mathrm{nod}, p} \le  c_{\mathrm{nod}, 2},
\]
from which the upper bounds of the claim follow.
\end{proofclaim}

\begin{claim}
\label{claimNonzero}
\[
 \liminf_{p \to 2} \int_{\Rset^N} \abs{\nabla u_p^\pm}^2 + \abs{u_p^\pm}^2
 = \liminf_{p \to 2} \int_{\Rset^N} \bigl(I_\alpha \ast \abs{u_p}^p\bigr) \abs{u_p^\pm}^p > 0.
\]
\end{claim}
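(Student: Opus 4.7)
My plan is to first derive the equality as a Nehari-type identity for the positive and negative parts of $u_p$, and then to establish the strict positivity by a contradiction argument exploiting that $p > 2$.

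For the equality, I would test the equation $-\Delta u_p + u_p = (I_\alpha \ast |u_p|^p)|u_p|^{p-2}u_p$ against $u_p^+$ and then against $u_p^-$. Using the standard identities $\nabla u_p \cdot \nabla u_p^+ = |\nabla u_p^+|^2$ and $u_p \cdot u_p^+ = |u_p^+|^2$ a.e., together with the pointwise identity $|u_p|^{p-2} u_p \cdot u_p^+ = |u_p^+|^p$ following from the disjoint supports of $u_p^+$ and $u_p^-$, I obtain
\[
 \int_{\Rset^N} |\nabla u_p^\pm|^2 + |u_p^\pm|^2 = \int_{\Rset^N} \bigl(I_\alpha \ast |u_p|^p\bigr)|u_p^\pm|^p
\]
for each $p$, which immediately yields the asserted equality of the two $\liminf$s.

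For strict positivity, I would argue by contradiction: assume along some sequence $p_n \downto 2$ that $\|u_{p_n}^+\|_{H^1} \to 0$ (the case of $u^-$ is symmetric). Splitting $|u_{p_n}|^{p_n} = |u_{p_n}^+|^{p_n} + |u_{p_n}^-|^{p_n}$ via the disjointness of supports, then applying the Hardy--Littlewood--Sobolev inequality \eqref{eqHLS} followed by the Sobolev embedding $H^1(\Rset^N) \hookrightarrow L^{2Np_n/(N+\alpha)}(\Rset^N)$ to each factor, the identity above gives
\[
 \|u_{p_n}^+\|_{H^1}^2 \le C \|u_{p_n}^+\|_{H^1}^{2 p_n} + C \|u_{p_n}^-\|_{H^1}^{p_n} \|u_{p_n}^+\|_{H^1}^{p_n}.
\]
Dividing through by $\|u_{p_n}^+\|_{H^1}^{p_n}$ and using the uniform $H^1$-bound on $u_{p_n}$ from Claim~\ref{claimConstructionSequence}, I arrive at $\|u_{p_n}^+\|_{H^1}^{2 - p_n} \le C'$ for some $C'$ independent of $n$. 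Since $p_n > 2$ forces $2 - p_n < 0$, the assumption $\|u_{p_n}^+\|_{H^1} \to 0$ makes the left-hand side diverge, a contradiction.

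The main obstacle is to ensure that the Sobolev and HLS constants can be chosen uniformly in $p$ on a right neighbourhood of $2$. The Sobolev exponent $2Np/(N+\alpha)$ lies in $[2, 2N/(N-2)]$ at $p = 2$ precisely when $\alpha \in [N-4, N)$, which is ensured by the hypothesis $\alpha \in ((N-4)^+, N)$ of Theorem~\ref{theoremMain}; the HLS constant in \eqref{eqHLS} depends continuously on its integrability exponent and so remains bounded on compact neighbourhoods of $2$. Hence the constant $C$ appearing in the chain of estimates can indeed be taken independent of $n$, and the contradiction is legitimate.
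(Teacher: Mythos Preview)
Your derivation of the equality by testing the equation against $u_p^\pm$ is correct and is implicit in the paper (it is exactly the condition $u_p \in \mathcal{N}_{\mathrm{nod},p}$).

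The positivity argument, however, has a genuine gap. After dividing by $\norm{u_{p_n}^+}_{H^1}^{p_n}$ you obtain $\norm{u_{p_n}^+}_{H^1}^{2-p_n} \le C'$ and then claim that, since $2 - p_n < 0$ and $\norm{u_{p_n}^+}_{H^1} \to 0$, the left-hand side diverges. But the exponent $2 - p_n$ tends to $0$, so this is an indeterminate form: for instance, if $p_n = 2 + 1/n$ and $\norm{u_{p_n}^+}_{H^1} = 1/n$, then $\norm{u_{p_n}^+}_{H^1}^{2-p_n} = n^{1/n} \to 1$, and no contradiction arises. Your inequality does give a positive lower bound on $\norm{u_p^\pm}_{H^1}$ for each \emph{fixed} $p>2$, but that bound degenerates as $p \downto 2$ and tells you nothing about the $\liminf$.

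This degeneracy is precisely why the paper's proof is substantially more delicate. It renormalises $v_p = u_p^- / \norm{u_p^-}_{H^1}$, uses a quantitative concentration lemma (from \cite{GhimentiVanSchaftingen}) to locate mass of both $u_p$ and $v_p$ in a common ball, passes to weak limits $u$ and $v$ along a suitably translated subsequence, and then invokes elliptic regularity together with the strong maximum principle for the limiting quadratic Choquard equation to force $u>0$ everywhere; this contradicts the sign constraint $\{v<0\}\subseteq\{u\le 0\}$ inherited from $v_p$. These PDE ingredients cannot be bypassed by the elementary Sobolev/HLS chain you propose.
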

\begin{proofclaim}
We first compute by the Hardy--Littlewood--Sobolev inequality
\[
\begin{split}
 \int_{\Rset^N} \abs{\nabla u_p}^2 + \abs{u_p}^2
 = \int_{\Rset^N} \bigl(I_\alpha \ast \abs{u_p}^p\bigr) \abs{u_p}^p
 &\le \Constant \Bigl(\int_{\Rset^N} \abs{u_p}^{\frac{2 N p}{N + \alpha}} \Bigr)^{1 + \frac{\alpha}{N}}\\
 &\le \Constant \Bigl(\int_{\Rset^N} \abs{\nabla u_p}^2 + \abs{u_p}^2 \Bigr)^p,
\end{split}
\]
where the constant \(\SameConstant\) can be taken independently of \(p \in (2, \frac{N + \alpha}{N - 2})\) once \(p\) remains bounded.
It follows then that
\[
 \liminf_{p \to 2} \SameConstant \Bigl(\int_{\Rset^N} \abs{\nabla u_p}^2 + \abs{u_p}^2\Bigr)^{p - 1} \ge 1,
\]
and thus,
\[
 \liminf_{p \to 2} \int_{\Rset^N} \abs{\nabla u_p}^2 + \abs{u_p}^2 > 0.
\]

We assume now that there is a sequence \((p_n)_{n \in \N}\) such that
\begin{equation}
\label{eqPositivePartContradictionAssumption}
 \lim_{n \to \infty} \int_{\Rset^N} \abs{\nabla u_{p_n}^-}^2 + \abs{u_{p_n}^-}^2 = 0.
\end{equation}
For \(p \in (2, \frac{N + \alpha}{(N - 2)_+})\) we define the renormalised negative part
\[
 v_{p} = \frac{u_{p}^-}{\norm{u_{p}^-}_{H^1 (\Rset^N)}}.
\]
We first observe that
\begin{equation}
\label{eqRenormalizedInteractionLowerBound}
 \int_{\Rset^N} \bigl(I_{\alpha} \ast \abs{u_{p}}^{p}\bigr) \abs{v_p}^p
 = 1.
\end{equation}
By \cite{GhimentiVanSchaftingen}*{lemma~3.6},
for every \(\beta \in \bigl(\alpha, N\bigr)\) there exist \(C_5,C_6
> 0\) such that
\begin{multline*}
 \int_{\Rset^N} \bigl(I_\alpha \ast \abs{u_p}^p\bigr) \abs{v_p}^p
 \le \Constant \Bigl(\int_{\Rset^N} \abs{\nabla
u_p}^2 + \abs{u_p}^2 \int_{\Rset^N}  \abs{\nabla
v_p}^2 + \abs{v_p}^2 \Bigr)^\frac{1}{2}\\
\shoveright{\times \Bigl(\sup_{a \in \Rset^N} \int_{B_R
(a)} \abs{u_p}^\frac{2 N p}{N + \alpha}\int_{B_R (a)}
\abs{v_p}^\frac{2 N p}{N + \alpha}\Bigr)^{\frac{N + \alpha}{2
N}(1 - \frac{1}{p})}}\\
+ \frac{\Constant}{R^{\beta - \alpha}} \Bigl(\int_{\Rset^N} \abs{\nabla
u_p}^2 + \abs{u_p}^2 \int_{\Rset^N}  \abs{\nabla
v_p}^2 + \abs{v_p}^2 \Bigr)^\frac{p}{2}
\end{multline*}
The constants come from the Hardy--Littlewood--Sobolev inequality and from the Sobolev inequality;
they can thus be taken to be uniform as \(p \to 2\).
Since \(u_p\) and \(v_p\) remain bounded in \(H^1 (\Rset^N)\) as \(p \to 2\), we have
\begin{multline*}
 \int_{\Rset^N} \bigl(I_\alpha \ast \abs{u_p}^p\bigr) \abs{v_p}^p\\
 \le \Constant \Bigl(\sup_{a \in \Rset^N} \int_{B_R
(a)} \abs{u_p}^\frac{2 N p}{N + \alpha}\int_{B_R (a)}
\abs{v_p}^\frac{2 N p}{N + \alpha}\Bigr)^{\frac{N + \alpha}{2
N}(1 - \frac{1}{p})} + \frac{\Constant}{R^{\beta - \alpha}}.
\end{multline*}
In view of \eqref{eqRenormalizedInteractionLowerBound}, there exists \(R > 0\) such that
\[
 \liminf_{p \to 2} \sup_{a \in \Rset^N} \Bigl(\int_{B_R
(a)} \abs{u_p}^\frac{2 N p}{N + \alpha}\int_{B_R (a)}
\abs{v_p}^\frac{2 N p}{N + \alpha} \Bigr) > 0.
\]
In particular, there exists a sequence of vectors \((a_n)_{n \in \N}\) in \(\Rset^N\)  and a sequence of real numbers $(p_n)_{n \in \N}$ in \((2, \frac{N + \alpha}{N - 2}) \) converging to $2$
such that
\begin{equation}
\label{ineqLiminfuv}
  \liminf_{n  \to \infty} \Bigl(\int_{B_R
(a_n)} \abs{u_{p_n}}^\frac{2 N {p_n}}{N + \alpha}\int_{B_R (a_n)}
\abs{v_{p_n}}^\frac{2 N {p_n}}{N + \alpha} \Bigr) > 0.
\end{equation}
There exists thus a subsequence \((n_k)_{k \in \N}\) such that
the sequences of functions \((u_{p_{n_k}} (\cdot - a_{n_k}))_{k \in \N}\) and \((v_{p_{n_k}} (\cdot - a_{n_k}))_{k \in \N}\) both converge weakly in the space \(H^1 (\Rset^N)\) to some functions $u$ and $v\in H^1(\Rset^N)$.

By our contradiction assumption \eqref{eqPositivePartContradictionAssumption} and the Rellich--Kondrachov compactness theorem, we  have \(u \ge 0\).
By the classical Rellich--Kondrachov compactness theorem, it follows from \eqref{ineqLiminfuv} that
\begin{align}
\label{eqUPositivity}
 \int_{B_R} \abs{u}^\frac{2 N p}{N + \alpha}& > 0&
 &\text{ and }&
 \int_{B_R}
\abs{v}^\frac{2 N p}{N + \alpha} & > 0.
\end{align}
We also observe that by definition of \(v_p\),
\[
 \{x \in \Rset^N \st v_p (x) < 0\} \subseteq \{ x \in \Rset^N \st u_p (x) \le 0\},
\]
so that by the Rellich--Kondrachov theorem, we have
\begin{equation}
\label{eqUNegativity}
\{ x \in \Rset^N \st v (x) < 0\}
\subseteq \{ x \in \Rset^N \st u (x) \le 0\}.
\end{equation}

Since by the classical Rellich--Kondrachov compactness theorem, the sequence \((\abs{u_{p_{n_k}} (\cdot - a_{n_k})}^{p_n})_{k \in \N}\) converges locally in measure to \(\abs{u}^2\) and is bounded in \(L^{2 N/(N + \alpha)} (\Rset^N)\),
it converges weakly to \(\abs{u}^2\) in the space \(L^{2N/(N + \alpha)} (\Rset^N)\)
\citelist{\cite{Bogachev2007}*{proposition~4.7.12}\cite{Willem2013}*{
proposition 5.4.7}}.
In view of the Hardy--Littlewood--Sobolev inequality \eqref{eqHLS} and the continuity of bounded linear operators for the weak topology, the sequence \((I_\alpha \ast \abs{u_{p_{n_k}} (\cdot - a_{n_k})}^{p_{n_k}})_{k \in \N}\) converges weakly to \(I_\alpha \ast \abs{u}^2\)
in \(L^{2N/(N - \alpha)} (\Rset^N)\).
Since \(((\abs{u_{p_{n_k}}}^{p_{n_k} - 2} u_{p_{n_k}})(\cdot - a_{n_k}))_{k \in \N}\) converges to \(u\) in \(L^2_{\mathrm{loc}} (\Rset^N)\),
we conclude that
\[
 \bigl(I_\alpha \ast \abs{u_{p_{n_k}} (\cdot - a_{n_k})}^{p_{n_k}}\bigr)\bigl(\abs{u_{p_{n_k}}}^{p_ {n_k} - 2} u_{p_{n_k}}\bigr)\,(\cdot - a_{n_k}) \to \bigl(I_\alpha \ast \abs{u}^2\bigr)u
\]
in \(L^{2N/(2 N - \alpha)} (\Rset^N)\), as \(k \to \infty\).
By construction of the function \(u_p\), we deduce from that the function \(u \in H^1 (\Rset^N)\) is a weak solution of the problem
\[
 -\Delta u + u = \bigl(I_{\alpha} \ast \abs{u}^2\bigr) u.
\]
By the classical bootstrap method for subcritical semilinear elliptic problems applied to the Choquard equation (see for example \citelist{\cite{MorozVanSchaftingen13}*{proposition 4.1}\cite{CingolaniClappSecchi2012}*{lemma A.1}}), \(u\) is smooth. Since \(u \ge 0\), by the strong maximum principle we have either \(u=0\) or \(u > 0\), in contradiction with \eqref{eqUPositivity} and \eqref{eqUNegativity}.
The claim is thus proved by contradiction.
\end{proofclaim}

\begin{claim}
There exists \(R > 0\) such that
\[
 \limsup_{p \to 2} \sup_{a \in \Rset^N} \int_{B_R (a)} \abs{u_p^+}^\frac{2
N p}{N + \alpha} \int_{B_R (a)} \abs{u_p^-}^\frac{2 N
p}{N + \alpha} > 0.
\]
\end{claim}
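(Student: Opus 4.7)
My plan is to argue by contradiction. If for every $R > 0$ the quantity in the claim converges to $0$ as $p \to 2$, then the positive and negative parts of $u_p$ asymptotically decouple, and after renormalising each of them to lie on the ground-state Nehari manifold $\mathcal{N}_{0,p}$ they produce the two-bump lower bound $2\,c_{0,p} \leq (1+o(1))\,c_{\mathrm{nod},p}$; taking $p \to 2$ will then contradict Proposition~\ref{propositionStrictInequality}.

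The first step is to show that under the contradiction hypothesis the cross interaction $\int_{\Rset^N} \bigl(I_\alpha \ast \abs{u_p^+}^p\bigr) \abs{u_p^-}^p$ vanishes as $p\to 2$. This is done by applying \cite{GhimentiVanSchaftingen}*{lemma~3.6} exactly as in the proof of Claim~\ref{claimNonzero}, but now with $(u_p^+, u_p^-)$ in place of $(u_p, v_p)$. Because $u_p^\pm$ are uniformly bounded in $H^1(\Rset^N)$ by Claim~\ref{claimConstructionSequence} and the constants come only from Hardy--Littlewood--Sobolev and Sobolev inequalities, the bound takes the form
\[
 \int_{\Rset^N} \bigl(I_\alpha \ast \abs{u_p^+}^p\bigr) \abs{u_p^-}^p \leq \Constant \Bigl(\sup_{a \in \Rset^N} \int_{B_R(a)} \abs{u_p^+}^{\frac{2Np}{N+\alpha}} \int_{B_R(a)} \abs{u_p^-}^{\frac{2Np}{N+\alpha}}\Bigr)^{\theta_p} + \frac{\Constant}{R^{\beta-\alpha}},
\]
with $\theta_p = \frac{N+\alpha}{2N}(1 - \frac{1}{p})$ bounded away from $0$ and the constants uniform for $p$ near $2$; choosing $R$ large first and then letting $p \to 2$ yields the claimed vanishing.

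Using the disjoint-support identity $\abs{u_p}^p = \abs{u_p^+}^p + \abs{u_p^-}^p$ and testing the equation against $u_p^\pm$ then gives
\[
 \int_{\Rset^N} \abs{\nabla u_p^\pm}^2 + \abs{u_p^\pm}^2 = \int_{\Rset^N} \bigl(I_\alpha \ast \abs{u_p^\pm}^p\bigr) \abs{u_p^\pm}^p + o(1).
\]
By Claim~\ref{claimNonzero} both sides remain bounded away from $0$, so a unique $t_{\pm,p}>0$ exists with $t_{\pm,p}^{1/p}u_p^\pm \in \mathcal{N}_{0,p}$, and the defining identity $t_{\pm,p}^{2-2/p} = \int \abs{\nabla u_p^\pm}^2 + \abs{u_p^\pm}^2 / \int \bigl(I_\alpha \ast \abs{u_p^\pm}^p\bigr) \abs{u_p^\pm}^p$ forces $t_{\pm,p} \to 1$. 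Consequently
\[
 c_{0,p} \leq \mathcal{A}_p\bigl(t_{\pm,p}^{1/p}u_p^\pm\bigr) = (1+o(1))\Bigl(\frac{1}{2} - \frac{1}{2p}\Bigr) \int_{\Rset^N} \abs{\nabla u_p^\pm}^2 + \abs{u_p^\pm}^2.
\]

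Adding the two inequalities and using $\int_{\Rset^N} \abs{\nabla u_p}^2 + \abs{u_p}^2 = \int_{\Rset^N} \abs{\nabla u_p^+}^2 + \abs{u_p^+}^2 + \int_{\Rset^N} \abs{\nabla u_p^-}^2 + \abs{u_p^-}^2$ produces $2\,c_{0,p} \leq (1+o(1))\,\mathcal{A}_p(u_p) = (1+o(1))\,c_{\mathrm{nod},p}$. Continuity of $p \mapsto c_{0,p}$ (Proposition~\ref{continuityGroundstate}) bounds the left-hand side at $p=2$ by $2\,c_{0,2}$, while Claim~\ref{claimConstructionSequence} gives $\limsup_{p \to 2} c_{\mathrm{nod},p} \leq c_{\mathrm{nod},2}$ on the right, so passing to $\liminf$ as $p \to 2$ yields $2\,c_{0,2} \leq c_{\mathrm{nod},2}$, contradicting Proposition~\ref{propositionStrictInequality}. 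The most delicate point is the uniformity in $p$ of the nonlocal interaction estimate near $p = 2$, but this is exactly the uniformity that was already exploited in the proof of Claim~\ref{claimNonzero}.
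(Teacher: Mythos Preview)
Your proposal is correct and follows essentially the same route as the paper: assume the negation, use \cite{GhimentiVanSchaftingen}*{lemma~3.6} with the uniform boundedness from Claim~\ref{claimConstructionSequence} to force the cross interaction \(\int_{\Rset^N}(I_\alpha\ast|u_p^+|^p)|u_p^-|^p\) to vanish, rescale \(u_p^\pm\) onto \(\mathcal{N}_{0,p}\) with scalings tending to \(1\) thanks to Claim~\ref{claimNonzero}, and combine Proposition~\ref{continuityGroundstate} with the upper bound of Claim~\ref{claimConstructionSequence} to reach \(2c_{0,2}\le c_{\mathrm{nod},2}\), contradicting Proposition~\ref{propositionStrictInequality}. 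The only cosmetic difference is that the paper rescales by \(t_{p,\pm}\) and passes through the combined function \(t_{p,+}u_p^+ + t_{p,-}u_p^-\) and an explicit splitting identity for \(\mathcal{A}_p\), whereas you rescale by \(t_{\pm,p}^{1/p}\) and simply add the two one-bump estimates; both arrive at the same inequality.
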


\begin{proofclaim}
We assume by contradiction that for every \(R > 0\),
\begin{equation}
 \label{eqRepulsionContradiction}
 \lim_{p \to 2} \sup_{a \in \Rset^N} \int_{B_R (a)} \abs{u_p^+}^\frac{2
N p}{N + \alpha} \int_{B_R (a)} \abs{u_p^-}^\frac{2 N
p}{N + \alpha} = 0.
\end{equation}
In view of \cite{GhimentiVanSchaftingen}*{lemma~3.6} and since the sequences \((u_n^+)_{n
\in
\N}\) and \((u_n^-)_{n \in \N}\) are both bounded in \(H^1 (\Rset^N)\), we have, as in the proof of claim~\ref{claimNonzero}, for every \(\beta \in (\alpha, N)\) and \(R > 0\),
\begin{multline*}
 \int_{\Rset^N} (I_\alpha \ast \abs{u_p^+}^p) \abs{u_p^-}^p\\
 \le \Constant \Bigl(\sup_{a \in \Rset^N} \int_{B_R
(a)} \abs{u_p^+}^\frac{2 N p}{N + \alpha}\int_{B_R (a)}
\abs{u_p^-}^\frac{2 N p}{N + \alpha}\Bigr)^{\frac{N + \alpha}{2
N}(1 - \frac{1}{p})} + \frac{\Constant}{R^{\beta - \alpha}}.
\end{multline*}
By our assumption \eqref{eqRepulsionContradiction},
we have thus
\begin{equation}
\label{eqNoInteraction}
 \lim_{p \to 2} \int_{\Rset^N} \bigl(I_\alpha \ast \abs{u_p^+}^p\bigr)
\abs{u_p^-}^p = 0.
\end{equation}

We define now the pair \((t_{p, +}, t_{p, -}) \in (0, \infty)^2\) by the condition that \(t_{p, \pm} u_p^\pm \in
\mathcal{N}_{0, p}\), or equivalently,
\[
 t_{p, \pm}^{2 p - 2} =
 \frac{\displaystyle \int_{\Rset^N}\abs{\nabla u_p^\pm}^2 + \abs{u_p^\pm}^2}{
 \displaystyle  \int_{\Rset^N} \bigl(I_\alpha
\ast \abs{u_p^\pm}^p\bigr) \abs{u_p^\pm}^p}
=\frac{\displaystyle \int_{\Rset^N} \bigl(I_\alpha
\ast \abs{u_p}^p\bigr) \abs{u_p^\pm}^p}{
 \displaystyle  \int_{\Rset^N} \bigl(I_\alpha
\ast \abs{u_p^\pm}^p\bigr) \abs{u_p^\pm}^p}
=1 + o (1)
\]
as \(p \to 2\), in view of claim~\ref{claimNonzero} and of \eqref{eqNoInteraction}.
Since the family \(u_p\) remains bounded in \(H^1 (\Rset^N)\), we have
\[
 \lim_{p \to 2} \mathcal{A}_p (t_{p, +} u_p^+ + t_{p, -} u_p^-) - \mathcal{A}_p (u_p)
 = 0.
\]
In view of the identity
\begin{multline*}
 \mathcal{A}_p (t_{p, +} u_p^+ + t_{p, -} u_p^-)\\
 = \mathcal{A}_p (t_{p, +} u_p^+) + \mathcal{A}_p(t_{p, -} u_p^-)
 -\frac{t_{p, +}^p t_{p, -}^p}{p} \int_{\Rset^N} \bigl(I_\alpha \ast
\abs{u_p^+}^p\bigr) \abs{u_p^-}^p
\end{multline*}
and by \eqref{eqNoInteraction}, we conclude that
\[
  \liminf_{p \to 2} \mathcal{A}_p (u_p) \ge 2 \liminf_{p \to 2} c_{0, p}.
\]
By claim~\ref{claimConstructionSequence} and by proposition~\ref{continuityGroundstate}, this implies that
\[
 c_{\mathrm{nod}, 2} \ge 2 c_{0, 2},
\]
in contradiction with proposition~\ref{propositionStrictInequality}.
\end{proofclaim}

We are now in a position to conclude the proof.
Up to a translation, there exist $R>0$ and a sequence \((p_n)_{n \in \N}\) in \((2, \frac{N + \alpha}{N - 2})\) such that \(p_n \downto 2\)  as \(n \to \infty\),
\[
 \liminf_{n \to \infty} \int_{B_R} \abs{u_{p_n}^\pm}^\frac{2
N p}{N + \alpha} \ge 0
\]
and the sequence \((u_{p_n})_{n \in \N}\) converges weakly in \(H^1 (\Rset^N)\) to some function \(u \in H^1
(\Rset^N)\).

As in the proof of claim~\ref{claimNonzero}, by the
weak convergence and by the classical Rellich--Kondrachov compactness
theorem, we have \(\mathcal{A}'_2 (u) = 0\) and \(u^\pm \ne 0\), whence \(u \in
\mathcal{N}_{2, \mathrm{nod}}\).
We also have by the weak lower semicontinuity of the norm,
\[
\begin{split}
\lim_{n \to \infty} \mathcal{A}_{p_n} (u_{p_n}) &= \lim_{n \to \infty} \Bigl(\frac{1}{2} - \frac{1}{2 p_n} \Bigr)\int_{\Rset^N}
\abs{\nabla u_{p_n}}^2 + \abs{u_{p_n}}^2\\
&\ge \frac{1}{4} \int_{\Rset^N} \abs{\nabla u}^2 +
\abs{u}^2 =\mathcal{A}_2 (u),
\end{split}
\]
and thus \(\mathcal{A}_2 (u) = c_{\mathrm{nod}, 2}\).
\end{proof}

In claim~\ref{claimNonzero}, the study of the renormalised negative part to prevent vanishing is reminiscent of the idea of taking the renormalised approximate solution to bypass the Ambrosetti--Rabinowitz superlinearity condition \citelist{\cite{LiuWang2004}\cite{VanSchaftigenWillem2008}}.

\begin{bibdiv}
  \begin{biblist}

  \bib{AckermannWeth2005}{article}{
   author={Ackermann, Nils},
   author={Weth, Tobias},
   title={Multibump solutions of nonlinear periodic Schr\"odinger equations
   in a degenerate setting},
   journal={Commun. Contemp. Math.},
   volume={7},
   date={2005},
   number={3},
   pages={269--298},
   issn={0219-1997},
}

    \bib{Bogachev2007}{book}{
      author={Bogachev, V. I.},
      title={Measure theory},
      publisher={Springer},
      place={Berlin},
      date={2007},
      isbn={978-3-540-34513-8},
      isbn={3-540-34513-2},
    }

\bib{Ghimenti-soliton}{article}{
   author={Bonanno, Claudio},
   author={d'Avenia, Pietro},
   author={Ghimenti, Marco},
   author={Squassina, Marco},
   title={Soliton dynamics for the generalized Choquard equation},
   journal={J. Math. Anal. Appl.},
   volume={417},
   date={2014},
   number={1},
   pages={180--199},
   issn={0022-247X},
}

    \bib{CastroCossioNeuberger1997}{article}{
      author={Castro, Alfonso},
      author={Cossio, Jorge},
      author={Neuberger, John M.},
      title={A sign-changing solution for a superlinear Dirichlet problem},
      journal={Rocky Mountain J. Math.},
      volume={27},
      date={1997},
      number={4},
      pages={1041--1053},
      issn={0035-7596},
    }

    \bib{CastorCossioNeuberger1998}{article}{
      author={Castro, Alfonso},
      author={Cossio, Jorge},
      author={Neuberger, John M.},
      title={A minmax principle, index of the critical point, and existence of
      sign-changing solutions to elliptic boundary value problems},
      journal={Electron. J. Differential Equations},
      volume={1998},
      date={1998},
      number={2},
      pages={18},
      issn={1072-6691},
    }

    \bib{CeramiSoliminiStruwe1986}{article}{
      author={Cerami, G.},
      author={Solimini, S.},
      author={Struwe, M.},
      title={Some existence results for superlinear elliptic boundary value
      problems involving critical exponents},
      journal={J. Funct. Anal.},
      volume={69},
      date={1986},
      number={3},
      pages={289--306},
      issn={0022-1236},
    }

    \bib{CingolaniClappSecchi2012}{article}{
      author={Cingolani, Silvia},
      author={Clapp, M{\'o}nica},
      author={Secchi, Simone},
      title={Multiple solutions to a magnetic nonlinear Choquard equation},
      journal={Z. Angew. Math. Phys.},
      volume={63},
      date={2012},
      number={2},
      pages={233--248},
      issn={0044-2275},
    }

    \bib{CingolaniSecchi}{article}{
   author={Cingolani, Silvia},
   author={Secchi, Simone},
   title={Multiple \(\mathbb{S}^{1}\)-orbits for the Schr\"odinger-Newton system},
   journal={Differential and Integral Equations},
   volume={26},
   number={9/10},
   pages={867--884},
   date={2013},
}

\bib{CingolaniSecchi-2015}{article}{
   author={Cingolani, Silvia},
   author={Secchi, Simone},
   title={Ground states for the pseudo-relativistic Hartree equation with
   external potential},
   journal={Proc. Roy. Soc. Edinburgh Sect. A},
   volume={145},
   date={2015},
   number={1},
   pages={73--90},
   issn={0308-2105},
}

    \bib{ClappSalazar}{article}{
   author={Clapp, M{\'o}nica},
   author={Salazar, Dora},
   title={Positive and sign changing solutions to a nonlinear Choquard
   equation},
   journal={J. Math. Anal. Appl.},
   volume={407},
   date={2013},
   number={1},
   pages={1--15},
   issn={0022-247X},
}

\bib{fractionalChoquard}{article}{
   author={d'Avenia, Pietro},
   author={Siciliano, Gaetano},
   author={Squassina, Marco},
   title={On fractional Choquard equations},
   journal={Math. Models Methods Appl. Sci.},
   volume={25},
   date={2015},
   number={8},
   pages={1447--1476},
   issn={0218-2025},
}

\bib{DevreeseAlexandrov2009}{book}{
  author={Devreese, Jozef T.},
  author={Alexandrov, Alexandre S. },
  title={Advances in polaron physics},
  year={2010},
  publisher={Springer},
  volume={159},
  series={Springer Series in Solid-State Sciences},
  pages={ix+167},
}

\bib{Frohlich-Lenzmann}{article}{
   author={Fr{\"o}hlich, J{\"u}rg},
   author={Lenzmann, Enno},
   title={Mean-field limit of quantum Bose gases and nonlinear Hartree
   equation},
   conference={
      title={S\'eminaire: \'Equations aux D\'eriv\'ees Partielles.
      2003--2004},
   },
   book={
      series={S\'emin. \'Equ. D\'eriv. Partielles},
      publisher={\'Ecole Polytech., Palaiseau},
   },
   date={2004},
   pages={Exp. No. XIX, 26},
}

    \bib{GhimentiVanSchaftingen}{article}{
      author={Ghimenti, Marco},
      author={Van Schaftingen, Jean},
      title={Nodal solutions for the Choquard equation},
      eprint={arXiv:1503.06031},
    }

\bib{KRWJones1995newtonian}{article}{
  title={Newtonian Quantum Gravity},
  author={Jones, K. R. W.},
  journal={Australian Journal of Physics},
  volume={48},
  number={6},
  pages={1055-1081},
  year={1995},
}

\bib{Lenzmann-2009}{article}{
   author={Lenzmann, Enno},
   title={Uniqueness of ground states for pseudorelativistic Hartree
   equations},
   journal={Anal. PDE},
   volume={2},
   date={2009},
   number={1},
   pages={1--27},
   issn={1948-206X},
}

    \bib{Lieb1977}{article}{
      author={Lieb, Elliott H.},
      title={Existence and uniqueness of the minimizing solution of Choquard's
      nonlinear equation},
      journal={Studies in Appl. Math.},
      volume={57},
      date={1976/77},
      number={2},
      pages={93--105},
    }
    \bib{LiebLoss2001}{book}{
      author={Lieb, Elliott H.},
      author={Loss, Michael},
      title={Analysis},
      series={Graduate Studies in Mathematics},
      volume={14},
      edition={2},
      publisher={American Mathematical Society},
      place={Providence, RI},
      date={2001},
      pages={xxii+346},
      isbn={0-8218-2783-9},
    }

    \bib{Lions1980}{article}{
      author={Lions, P.-L.},
      title={The Choquard equation and related questions},
      journal={Nonlinear Anal.},
      volume={4},
      date={1980},
      number={6},
      pages={1063--1072},
      issn={0362-546X},
    }

    \bib{Lions1984-1}{article}{
   author={Lions, P.-L.},
   title={The concentration-compactness principle in the calculus of
   variations. The locally compact case.},
   part = {I},
   journal={Ann. Inst. H. Poincar\'e Anal. Non Lin\'eaire},
   volume={1},
   date={1984},
   number={2},
   pages={109--145},
   issn={0294-1449},
}

     \bib{Lions1984CC2}{article}{
      author={Lions, Pierre-Louis},
      title={The concentration-compactness principle in the calculus of
      variations. The locally compact case. II},
      journal={Ann. Inst. H. Poincar\'e Anal. Non Lin\'eaire},
      volume={1},
      date={1984},
      number={4},
      pages={223--283},
      issn={0294-1449},
    }

    \bib{LiuWang2004}{article}{
      author={Liu, Zhaoli},
      author={Wang, Zhi-Qiang},
      title={On the Ambrosetti-Rabinowitz superlinear condition},
      journal={Adv. Nonlinear Stud.},
      volume={4},
      date={2004},
      number={4},
      pages={563--574},
      issn={1536-1365},
    }

\bib{Ma-Zhao-2010}{article}{
   author={Ma,Li},
   author={Zhao, Lin},
   title={Classification of positive solitary solutions of the nonlinear
   Choquard equation},
   journal={Arch. Ration. Mech. Anal.},
   volume={195},
   date={2010},
   number={2},
   pages={455--467},
   issn={0003-9527},
}

\bib{Menzala1980}{article}{
   author={Menzala, Gustavo Perla},
   title={On regular solutions of a nonlinear equation of Choquard's type},
   journal={Proc. Roy. Soc. Edinburgh Sect. A},
   volume={86},
   date={1980},
   number={3-4},
   pages={291--301},
   issn={0308-2105},
}

\bib{Moroz-Penrose-Tod-1998}{article}{
   author={Moroz, Irene M.},
   author={Penrose, Roger},
   author={Tod, Paul},
   title={Spherically-symmetric solutions of the Schr\"odinger-Newton
   equations},
   journal={Classical Quantum Gravity},
   volume={15},
   date={1998},
   number={9},
   pages={2733--2742},
   issn={0264-9381},
}

    \bib{MorozVanSchaftingen13}{article}{
      author={Moroz, Vitaly},
      author={Van Schaftingen, Jean},
      title={Groundstates of nonlinear Choquard equations: existence, qualitative
        properties and decay asymptotics},
      journal={J. Funct. Anal.},
      volume={265},
      date={2013},
      pages={153--184},
    }

\bib{Pekar1954}{book}{
   author={Pekar, S.},
   title={Untersuchung {\"u}ber die Elektronentheorie der Kristalle},
   publisher={Akademie Verlag},
   place={Berlin},
   date={1954},
   pages={184},
}

    \bib{VanSchaftingen2014}{article}{
      author={Van Schaftingen, Jean},
      title={Interpolation inequalities between Sobolev and Morrey--Campanato
      spaces: A common gateway to concentration-compactness and
      Gagliardo--Nirenberg interpolation inequalities},
      journal={Port. Math.},
      volume={71},
      date={2014},
      number={3-4},
      pages={159--175},
      issn={0032-5155},
    }

    \bib{VanSchaftigenWillem2008}{article}{
      author={Van Schaftingen, Jean},
      author={Willem, Michel},
      title={Symmetry of solutions of semilinear elliptic problems},
      journal={J. Eur. Math. Soc. (JEMS)},
      volume={10},
      date={2008},
      number={2},
      pages={439--456},
      issn={1435-9855},
    }

    \bib{Weth2001}{thesis}{
      author={Weth, Tobias},
      title={Spectral and variational characterizations
	of solutions to semilinear eigenvalue problems},
      date={2001},
      organization={Johannes Gutenberg-Universit\"at},
      address={Mainz},
    }
    \bib{Weth2006}{article}{
   author={Weth, Tobias},
   title={Energy bounds for entire nodal solutions of autonomous superlinear
   equations},
   journal={Calc. Var. Partial Differential Equations},
   volume={27},
   date={2006},
   number={4},
   pages={421--437},
   issn={0944-2669},
}

    \bib{Willem1996}{book}{
      author={Willem, Michel},
      title={Minimax theorems},
      series={Progress in Nonlinear Differential Equations and their
      Applications, 24},
      publisher={Birkh\"auser},
      address={Boston, Mass.},
      date={1996},
      pages={x+162},
      isbn={0-8176-3913-6},
    }

    \bib{Willem2013}{book}{
      author = {Willem, Michel},
      title = {Functional analysis},
      subtitle = {Fundamentals and Applications},
      series={Cornerstones},
      publisher = {Birkh\"auser},
      place = {Basel},
      volume = {XIV},
      pages = {213},
      date={2013},
    }

 \end{biblist}

\end{bibdiv}

\end{document}